\newtheorem{theo}{Theorem}
\newtheorem{cor}{Corollary}
\newtheorem{defi}{Definition}
\newtheorem{prop}{Proposition}
\begin{document}

\title{The Invere $p$-Maxian Problem on Trees with Variable Edge Lengths}
\author{Kien Trung Nguyen \\ 
              Mathematics Department, Teacher College, Cantho University, Vietnam. \\
             Email: trungkien@ctu.edu.vn }
\date{}
\maketitle

\begin{abstract}
We concern the problem of modifying the edge lengths of a tree in minimum total cost so that the prespecified $p$ vertices become the $p$-maxian with respect to the new edge lengths. This problem is called the inverse $p$-maxian problem on trees. \textbf{Gassner} proposed in 2008 an efficient combinatorial alogrithm to solve the inverse 1-maxian problem on trees. For the case $p \geq 2$, we claim that  the problem can be reduced to finitely many inverse $2$-maxian problems.  We then develop  algorithms to solve the inverse $2$-maxian problem for various objective functions. The problem under $l_1$-norm can be formulated as a linear program and thus can be solved in polynomial time. Particularly, if the underlying tree is a star, the problem can be solved in linear time. We also develop $O(n\log n)$ algorithms to solve the problems under Chebyshev norm and bottleneck Hamming distance, where $n$ is the number of vertices of the tree. Finally, the problem under weighted sum Hamming distance is $NP$-hard.\\

keywords: Location problem; $p$-Maxian; Inverse optimization; Tree.
\end{abstract}

\section{Introduction}
\label{sec0}

Location theory plays an important role in Operations Research due to its numerous applications. Here, we want to find optimal locations of new facilities. Location theory was intensively investigated, see Kariv and Hakimi $\cite{Kariv1,Kariv2}$, Hamacher $\cite{Hamacher}$, Eiselt $\cite{Eiselt}$. Recently, a new approach of location theory, the so-called inverse location problem, has been  focused and become an interesting research topic. In the inverse setting, we aim to modify the parameters in minimum cost so that the prespecified locations become optimal in the perturbed problem. I\textit{nverse median} and \textit{inverse center} problems are the popular topics in the field of inverse location theory.

For inverse 1-median problems, Burkard et al. $\cite{Burkard1}$ solved the inverse 1-median problem on trees  and the inverse 1-median problem on the plane with Manhattan norm in $O(n\log n)$ time. Then Galavii $\cite{Galavii}$ proposed a linear time algorithm  for the inverse 1-median problem on trees. Burkard et al. $\cite{Burkard0}$ investigated and solved the inverse Fermat-Weber problem   in $O(n\log n)$ time if the given points are not colinear.   Otherwise, the problem can be formulated as a convex program. For the inverse 1-median problem on a cycle, Burkard et al. $\cite{Burkard}$ developed an $O(n^{2})$ algorithm based on the concavity of the corresponding linear programmming constraints. Additionally, the inverse $p$-median problem on networks with variable edge lengths is $NP$-hard, see Bonab et al. $\cite{Bonab2}$. However, the inverse 2-median problem on a tree can be solved in polynomial time. More particularly, if the underlying tree is a star, the corresponding problem is solvable in linear time. Sepasian and Rahbarnia $\cite{Sepasian}$ investigated the inverse 1-median problem on trees with both  vertex weights and edge lengths variations. They proposed an $O(n\log n)$ algorithm to solve that problem.  While most recently papers concerned the inverse 1-median problem under linear cost functions, Guan and Zhang $\cite{Guan}$ solved the inverse 1-median problem on trees under Chebyshev norm ang Hamming distance by binary search algorithm in linear time.

Cai et al. $\cite{Cai}$ were the first who showed that although the 1-center problem on directed networks can be solved in polynomial time, the inverse problem is $NP$-hard. Hence, it is interesting to study some special situations which are polynomially solvable. Alizadeh and Burkard $\cite{Alizadeh1}$ developed a combinatorial algorithm with complexity of $O(n^{2})$ to solve the inverse 1-center problem on unweighted trees with variable edge lengths, provided that the edge lengths remain positive thoughout the modification. Dropping this condition, the problem can be solved in $O(n^{2}\textbf{c})$ time where $\textbf{c}$ is the compressed depth of the tree. For the corresponding uniform-cost problem, Alizadeh and Burkard $\cite{Alizadeh2}$ devised improved algorithms with running time $O(n\log n)$ and $O(\textbf{c}n\log n)$. Then Alizadeh et al. $\cite{Alizadeh3}$ use the AVL-tree structure to develop an $O(n\log n)$ algorithm for solving the inverse 1-center problem on trees with edge length augmentation. Especially, the uniform-cost problem can be solved in linear time. For the inverse 1-center problem on a simple generalization of tree graphs, the so-called cactus graphs, Nguyen ang Chassein $\cite{Nguyen}$ showed the $NP$-hardness. If the modification of vertex weights is taken into account, the inverse 1-center problem on trees can be solved in $O(n^{2})$ time, see Nguyen and Anh $\cite{Nguyen1}$. Furthermore, Nguyen and Sepasian $\cite{Nguyen2}$ solved the inverse 1-center problem on trees under Chebyshev norm and Hamming distance in $O(n\log n)$ time in the case there exists no topology change. In principle, the problem is solvable in quadratic time.
 
Although the inverse location problem was intensively studied, there is a limited number of  papers related to the inverse obnoxious location problem. Alizadeh and Burkard $\cite{Alizadeh4}$ developed linear time algorithm to solve the inverse obnoxious 1-center problem. Moreover, Gassner $\cite{Gassner}$ investigated the inverse 1-maxian problem on trees with variable edge lengths  and reduced the problem to a minimum cost circulation problem which can be solved in $O(n \log n)$ time. 

We focus in this paper the inverse $p$-maxian problem on trees with $p \geq 2$. This paper is organized as follows. We briefly introduce the optimality criterion of the $p$-maxian problem on trees as well as formulate the problem under arbitrary cost function in Section $\ref{sec1}$. Section $\ref{sec2}$ concentrates on the problem under $l_1$-norm. We show that the problem can be formulated as finitely many linear programs. If the underlying tree is a star, we can solve the problem in linear time. We focus on Section $\ref{sec3}$ and Section $\ref{sec4}$ the problems under Chebyshev norm and Hamming distance, respectively. We show that these problem  can be solved in $O(n\log n)$ time.

\section{The p-maxian problems on trees}
\label{sec1}
Given a graph $G = (V,E)$, $|V| = n$, each vertex $v_i \in V$ associates with a non-negative weight $w_i$ and each edge has a non-negative length $\ell_e$. The length of the shortest path $P(u,v)$ connecting two vertices $u$ and $v$ in $G$ is the distance $d(u,v)$ between these two vertices. A point on $G$ is either a vertex or lies on an edge of the graph. As proposed by Burkard et al. $\cite{Burkard2}$, the $p$-maxian problem on $G$ is to identify a set of $p$ points, say $X = \{x_1,x_2,\ldots,x_p\}$, to maximize the maxian objective function
\begin{center}
$ F(X) = \displaystyle \sum_{i=1}^{n} w_{i}\max_{1\leq j \leq p}d(v_{i},x_j)$.
\end{center}

By the vertex domination property, there exists a $p$-maxian of $G$ that is the set of $p$ vertices. Now we restrict ourselves in the case where the underlying graph $G$ is a tree, say $T$. Burkard et al. $\cite{Burkard2}$ already showed that $\{a,b\}$ is a 2-maxian of a tree  $T$ if $P(a,b)$ is its longest path. We further show that this condition is also the necessary condition as follows.
\begin{theo}\label{theoMaxian}(2-Maxian Criterion)\\
Given two vertices $a, b$ on the tree $T$, then $\{a,b\}$ is a 2-maxian of $T$ if and only if $P(a,b)$ is the longest path in the tree.
\end{theo}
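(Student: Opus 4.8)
The ``if'' direction is exactly the result of Burkard et al.\ quoted just above, so only the ``only if'' direction needs an argument. My plan is to first isolate a pointwise domination property of the endpoints of a longest path and then play it off against the optimality of $\{a,b\}$.

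\emph{Step 1 (key lemma).} I would first prove: if $P(c,d)$ is a longest path of $T$, then $\max\{d(v,c),d(v,d)\}\ge d(v,u)$ for \emph{all} vertices $u,v$. Let $m$ and $m'$ be the vertices of $P(c,d)$ closest to $v$ and to $u$. Then $d(v,c)=d(v,m)+d(m,c)$, $d(v,d)=d(v,m)+d(m,d)$, and $d(v,u)=d(v,m)+d(m,m')+d(m',u)$ when $m\ne m'$ (the $v$--$u$ path then passes through $P(c,d)$), while $d(v,u)\le d(v,m)+d(m,u)$ when $m=m'$. The structural input is that a branch hanging off a diameter path is no longer than either half it determines: if $d(m',u)>d(m',c)$ then $d(c,u)=d(c,m')+d(m',u)>d(c,d)$, contradicting maximality, and likewise on the other side, so $d(m',u)\le\min\{d(m',c),d(m',d)\}$. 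I would then finish by a short case split on where $m'$ lies relative to $m$ on $P(c,d)$: if $m'$ is between $m$ and $d$, use $d(m',u)\le d(m',d)$ to get $d(v,u)\le d(v,m)+d(m,m')+d(m',d)=d(v,d)$; if between $m$ and $c$, use $d(m',u)\le d(m',c)$ to get $d(v,u)\le d(v,c)$; and if $m=m'$, then $d(v,u)\le d(v,m)+\min\{d(m,c),d(m,d)\}\le\max\{d(v,c),d(v,d)\}$. In every case $d(v,u)\le\max\{d(v,c),d(v,d)\}$. (Summing this inequality against the weights re-proves the ``if'' direction, since it gives $F(\{c,d\})\ge F(\{x,y\})$ for every pair $\{x,y\}$.)

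\emph{Step 2 (optimality forces diameter length).} Now suppose $\{a,b\}$ is a $2$-maxian; let $P(c,d)$ be a longest path and $D:=d(c,d)$. Applying Step 1 with $u=a$ and with $u=b$ gives $\max\{d(v_i,c),d(v_i,d)\}\ge\max\{d(v_i,a),d(v_i,b)\}$ for each $i$, hence $F(\{c,d\})\ge F(\{a,b\})$; since $\{a,b\}$ is optimal this is an equality, and then (using $w_i>0$) $\max\{d(v,c),d(v,d)\}=\max\{d(v,a),d(v,b)\}$ for \emph{every} vertex $v$. I would evaluate this identity at three vertices: at $v=a$ it reads $\max\{d(a,c),d(a,d)\}=d(a,b)$, so $d(a,c)\le d(a,b)$; at $v=b$ it gives $d(b,c)\le d(a,b)$; at $v=c$ it gives $\max\{d(c,a),d(c,b)\}=D$. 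If $d(a,b)<D$ then $d(a,c)<D$, so the last equality forces $d(c,b)=D$, contradicting $d(b,c)\le d(a,b)<D$. Hence $d(a,b)=D=\mathrm{diam}(T)$, i.e.\ $P(a,b)$ is a longest path.

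\emph{Main obstacle.} Step 2 is just three substitutions once the lemma is in hand; the real work is Step 1, above all applying the half-path bound $d(m',u)\le\min\{d(m',c),d(m',d)\}$ on the side that matches the position of $m'$ along the diameter path. I am assuming all $w_i>0$, the customary hypothesis for maxian problems; with some zero weights the ``only if'' direction genuinely fails --- e.g.\ on a path $c$--$x$--$y$ with positive weight only at $c$, the pair $\{x,y\}$ is a $2$-maxian although $P(x,y)$ is not a longest path.
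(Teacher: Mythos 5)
Your proof is correct, and it takes a genuinely different route from the paper's. (One cosmetic slip in Step 1: the hypotheses are paired with the wrong endpoints --- $d(m',u)>d(m',d)$ is what gives $d(c,u)>d(c,d)$, while $d(m',u)>d(m',c)$ gives $d(d,u)>d(d,c)$ --- but the situation is symmetric and your conclusion $d(m',u)\le\min\{d(m',c),d(m',d)\}$ stands.) The paper proves the contrapositive directly: assuming $P(a,b)$ is not longest, it cuts $T$ at the edge containing the midpoint $m_{ab}$ of $P(a,b)$ into parts $L\ni a$ and $R\ni b$, writes $F(\{a,b\})=\sum_{v\in L}w_v d(v,b)+\sum_{v\in R}w_v d(v,a)$, and compares distances to $a,b$ with distances to the endpoints $s,t$ of a longest path via the midpoint, concluding the strict inequality $F(\{a,b\})<F(\{s,t\})$. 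You instead isolate the eccentricity lemma (for a longest path $P(c,d)$, every vertex's farthest vertex can be taken in $\{c,d\}$), get only the weak inequality $F(\{c,d\})\ge F(\{a,b\})$, and then exploit termwise equality at the three vertices $a$, $b$, $c$ to force $d(a,b)=\mathrm{diam}(T)$. Your route buys a clean reusable lemma that also re-proves the ``if'' direction and avoids having to produce a strict inequality, at the price of invoking $w_i>0$ explicitly in the equality-case step; the paper's route obtains strictness directly from the midpoint, but is terse about why $d(v,a)\le\max\{d(v,s),d(v,t)\}$ for $v\in R$ (your lemma is essentially what justifies that step), and it too tacitly needs positive weights to pass from ``at least one inequality is strict'' to $F(\{a,b\})<F(\{s,t\})$. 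Since the paper only assumes non-negative weights, your counterexample with a zero-weight vertex is a legitimate observation that the ``only if'' direction requires the positivity hypothesis you state.
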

\begin{proof}
For the sufficient condition, see $\cite{Burkard2}$. Assume that $P(a,b)$ is not the longest path of the tree, we will prove that $\{a,b\}$ is not its $2$-maxian as well. Let $m_{ab}$ be the midpoint of path $P(a,b)$. By deleting an edge that contains $m_{ab}$, we can deduce two parts $L$ and $R$ of $T$ which contain $a$ and $b$, respectively. Then we get
\begin{center}
$F(\{a,b\}) = \sum_{v \in L}w_vd(v,b) + \sum_{v \in R}w_vd(v,a)$.
\end{center}
Moreover, let $P(s,t)$ be the longest path of the tree. We trivially get $d(m_{ab},a) \leq \max\{d(m_{ab},s),d(m_{ab},t)\}$ and $d(m_{ab},b) \leq \max\{d(m_{ab},s),d(m_{ab},t)\}$, and at least one of the two inequalities does not hold with equality. Otherwise, it contradicts the assumption that $P(s,t)$ is the longest path. Therefore we get $d(v,a) \leq \max\{d(v,s),d(v,t)\}$ for $v \in R$ and $d(v,b) \leq \max\{d(v,s),d(v,t)\}$ for $v \in L$, and at least one of the inequalities will not hold with equality. We can finally conclude that $F(a,b) < F(s,t)$.
\end{proof}
 
We now reformulate the optimality criterion in Theorem $\ref{theoMaxian}$. Given two prespecified vertices $a$ and $b$, and a leaf  $v$, let $v_{ab}$ be the common vertex  of three paths $P(a,v)$, $P(b,v)$, and $P(a,b)$. The following result  states the necessary and sufficient conditions for  $\{a,b\}$ to be the longest path in $T$.
\begin{cor}\label{lemLong}(Longest path criterion)\\
$P(a,b)$ is the longest path of $T$ if and only if $d(v,v_{ab}) \leq d(a,v_{ab})$ and $d(v,v_{ab}) \leq d(b,v_{ab})$ for all leaves $v$ in $T$.
\end{cor}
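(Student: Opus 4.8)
The plan is to prove the corollary by connecting it back to Theorem~\ref{theoMaxian} via a careful analysis of path lengths through the vertex $v_{ab}$. First I would establish the basic structural fact that for any leaf $v$, the path $P(a,b)$ and the path $P(a,v)$ share a common initial segment from $a$ up to $v_{ab}$, and likewise $P(b,v)$ shares with $P(a,b)$ the segment from $b$ up to $v_{ab}$; this is just the standard "meeting point" property of paths in a tree, and it gives the decompositions $d(a,b) = d(a,v_{ab}) + d(v_{ab},b)$, $d(a,v) = d(a,v_{ab}) + d(v_{ab},v)$, and $d(b,v) = d(b,v_{ab}) + d(v_{ab},v)$.

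For the ``only if'' direction, suppose $P(a,b)$ is the longest path. Fix a leaf $v$. Consider the path $P(b,v)$: by the decomposition above its length is $d(b,v_{ab}) + d(v_{ab},v)$, while $d(a,b) = d(b,v_{ab}) + d(v_{ab},a)$. Since $P(a,b)$ is a longest path, $d(a,b) \geq d(b,v)$, which after cancelling the common term $d(b,v_{ab})$ yields $d(v_{ab},v) \leq d(v_{ab},a)$. Symmetrically, comparing $P(a,v)$ against $P(a,b)$ gives $d(v_{ab},v) \leq d(v_{ab},b)$. For the ``if'' direction, suppose the two inequalities hold for every leaf $v$. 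Since every longest path in a tree has leaves as endpoints (a path can always be extended toward a leaf without decreasing its length), it suffices to show $d(a,b) \geq d(s,t)$ for any two leaves $s,t$. Letting $P(s,t)$ be an arbitrary path between leaves, I would look at where it meets $P(a,b)$ and use the hypothesis at $s$ and at $t$ together with the decompositions to bound $d(s,t)$ from above by $d(a,b)$; then Theorem~\ref{theoMaxian} (or rather the definition of longest path it refers to) finishes it.

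The main obstacle is the ``if'' direction, specifically handling the geometry of how an arbitrary leaf-to-leaf path $P(s,t)$ sits relative to $P(a,b)$. The path $P(s,t)$ intersects $P(a,b)$ in a (possibly empty, possibly single-vertex) subpath; I would split into cases according to whether $s_{ab} = t_{ab}$ or $s_{ab} \neq t_{ab}$, where $s_{ab}$ and $t_{ab}$ are the meeting vertices for $s$ and $t$ respectively. In the first case, $d(s,t) = d(s,s_{ab}) + d(s_{ab},t) \leq 2\max\{d(s,s_{ab}), d(t,s_{ab})\}$ must be compared with $d(a,b) \geq d(a,s_{ab}) + d(s_{ab},b)$, and the hypothesis bounds each of $d(s,s_{ab}), d(t,s_{ab})$ by $\min\{d(a,s_{ab}), d(b,s_{ab})\}$, giving $d(s,t) \leq d(a,s_{ab}) + d(s_{ab},b) \leq d(a,b)$. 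In the second case, the intersection is a genuine subpath of $P(a,b)$, say from $s_{ab}$ to $t_{ab}$; writing $d(s,t) = d(s,s_{ab}) + d(s_{ab},t_{ab}) + d(t_{ab},t)$ and using $d(s,s_{ab}) \leq d(a,s_{ab})$ (applying the hypothesis at leaf $s$) and $d(t,t_{ab}) \leq d(t_{ab},b)$ (applying it at leaf $t$, after checking that the relevant meeting vertex is $t_{ab}$) yields $d(s,t) \leq d(a,s_{ab}) + d(s_{ab},t_{ab}) + d(t_{ab},b) = d(a,b)$. The care needed is in verifying that $v_{ab}$ for $v = s$ really is $s_{ab}$ as I have named it and that the inequality from the hypothesis applies in the direction I need; once that bookkeeping is done the computation is routine.
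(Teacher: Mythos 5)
Your argument is correct and complete. The paper states Corollary~\ref{lemLong} without any proof, treating it as immediate, and your decomposition of distances at the meeting vertices is precisely the elementary justification that is left implicit there: the ``only if'' direction by comparing $P(a,v)$ and $P(b,v)$ with $P(a,b)$ and cancelling the common segment, and the ``if'' direction by reducing to leaf-to-leaf paths and splitting on whether $s_{ab}=t_{ab}$ or not. The only imprecision is in your first case: when $s_{ab}=t_{ab}$ the path $P(s,t)$ need not pass through this common vertex, so the equality $d(s,t)=d(s,s_{ab})+d(s_{ab},t)$ should be the inequality $d(s,t)\leq d(s,s_{ab})+d(s_{ab},t)$; since your subsequent estimate only uses an upper bound, the conclusion $d(s,t)\leq d(a,s_{ab})+d(s_{ab},b)=d(a,b)$ is unaffected.
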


Next we formulate the inverse version of the $p$-maxian problem on trees. Given a tree $T = (V,E)$ and a prespecified $p$-vertex. We can assume without loss of generality that the prespecified $p$ vertices are the leaves of $T$. The length of  each edge $e$ in  $E$ can be increased or decreased by an amount $p_e$ or $q_e$, i.e. the new length of $e$ is $\tilde{\ell}_e  = \ell_e + p_e -q_e$ and is assumed to be non-negative. We can state the inverse $p$-maxian on $T$ as follows.
\begin{enumerate}
\item The $p$-vertex becomes a $p$-maxian of the tree with respect to the new edge lengths $\tilde{\ell}$.
\item The cost function $\mathcal C(p,q)$ is minimized.
\item Modifications are feasible, i.e. $0 \leq p_e \leq \bar{p}_e$ and $0 \leq q_e \leq \bar{q}_e$.
\end{enumerate}

Burkard et al. $\cite{Burkard2}$ states that the set $S$ of $p$ vertices is a $p$-maxian of $T$ if  and only if it contains a pair of vertices $\{a,b\}$ such that $P(a,b)$ is the longest path of $T$. Therefore, the inverse $p$-maxian problem on a tree can be reduced to $p^{2}$ many $2$-maxian problems on this tree. From here on, we focus the 2-maxian problem on the tree $T$. 

Consider a monotone cost function $\mathcal C$ and $\{a,b\}$ is a pair of leaves which are the prespecified vertices, we get the following property.
\begin{prop}\label{propInCre}
In the optimal solution of the inverse $2$-maxian problem on $T$, it suffices to increase the lengths of edges in $P(a,b)$ and reduce the lengths of edges in $T\backslash P(a,b)$.
\end{prop}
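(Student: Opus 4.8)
The plan is to argue by an exchange: starting from an arbitrary optimal solution $(p^*,q^*)$, I would produce another optimal solution that increases only edges of $P(a,b)$ and decreases only edges of $T\backslash P(a,b)$. Recall from Corollary \ref{lemLong} that feasibility of a modification $(p,q)$ amounts to having $d(v,v_{ab})\le d(a,v_{ab})$ and $d(v,v_{ab})\le d(b,v_{ab})$ for every leaf $v$, where all distances are taken with respect to the new lengths $\tilde\ell$, while the vertices $v_{ab}$ are fixed by the topology of $T$ and do not move when lengths change. The structural fact I would use repeatedly is that, for a leaf $v$, the tree decomposes at $v_{ab}$ so that the edge sets of $P(a,v_{ab})$, $P(v_{ab},b)$ and $P(v_{ab},v)$ are pairwise disjoint; in particular $P(v_{ab},v)$ shares no edge with $P(a,b)=P(a,v_{ab})\cup P(v_{ab},b)$, and $e\in P(a,b)$ lies in exactly one of $P(a,v_{ab})$, $P(v_{ab},b)$.

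First I would reduce to the case $\min\{p^*_e,q^*_e\}=0$ for every edge $e$: if not, lowering both $p^*_e$ and $q^*_e$ by $\min\{p^*_e,q^*_e\}$ leaves every $\tilde\ell_e$ unchanged, hence stays feasible, and does not increase $\mathcal C$ by monotonicity. Next, for any edge $e\notin P(a,b)$ with $p^*_e>0$ (so $q^*_e=0$), reset $p_e:=0$; this respects the box constraints, gives $\tilde\ell_e=\ell_e\ge 0$, and lowering $\tilde\ell_e$ can only decrease $d(v,v_{ab})$ for the leaves $v$ with $e\in P(v_{ab},v)$ while leaving every other such distance, and every $d(a,v_{ab})$ and $d(b,v_{ab})$, unchanged, so all inequalities of Corollary \ref{lemLong} persist and $\mathcal C$ does not increase. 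Symmetrically, for any edge $e\in P(a,b)$ with $q^*_e>0$ (so $p^*_e=0$), reset $q_e:=0$; then $\tilde\ell_e=\ell_e\ge 0$, the box constraints still hold, and raising $\tilde\ell_e$ increases exactly one of $d(a,v_{ab}),d(b,v_{ab})$ (according to which side of $v_{ab}$ the edge lies on) while leaving the other and every $d(v,v_{ab})$ unchanged, so again Corollary \ref{lemLong} is preserved and $\mathcal C$ does not increase. Since the two resetting steps touch disjoint edge sets, performing all of them yields a feasible solution of the required form with cost at most that of $(p^*,q^*)$, hence optimal.

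The only delicate part is the feasibility bookkeeping in the two resetting steps: one must check that moving an off-path edge downward (resp.\ a path edge upward) perturbs the three families $d(v,v_{ab})$, $d(a,v_{ab})$, $d(b,v_{ab})$ only in the harmless direction, and this is precisely where the decomposition of $T$ at $v_{ab}$ is invoked. A minor point to handle is the degenerate situation where $v_{ab}$ coincides with an endpoint of $e$ (or, a priori, with $a$ or $b$, though the latter cannot occur since $a,b$ are leaves of a nontrivial $P(a,b)$): even then $e$ still lies in exactly one of $P(a,v_{ab})$, $P(v_{ab},b)$, or in neither when it is off the path, so the same case analysis applies verbatim.
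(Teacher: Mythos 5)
Your argument is correct: the paper states Proposition \ref{propInCre} without any proof (it is asserted as a consequence of monotonicity of $\mathcal C$ and the optimality criterion), so there is no proof to compare against, and your exchange argument supplies exactly the justification the paper implicitly relies on. The key points you check --- that the junction vertices $v_{ab}$ are fixed by the topology, that $P(v,v_{ab})$ is edge-disjoint from $P(a,b)$ while $P(a,v_{ab})$ and $P(v_{ab},b)$ partition it, and that each reset moves the relevant distances only in the direction that preserves the inequalities of Corollary \ref{lemLong} without increasing the monotone cost --- are precisely what is needed, and the normalization $\min\{p_e,q_e\}=0$ plus the handling of the degenerate junction cases make the argument complete.
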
 
By Proposition $\ref{propInCre}$, we can set $x_e := p_e$ and $\bar{x}_e := \bar{p}_e$ for $e \in P(a,b)$, and $x_e := q_e$ and $\bar{x}_e = \bar{q}_e$ for $e \in T\backslash P(a,b)$. An edge $e$ is said to be modified by an amount $x_e$ if its modified length is set to $\tilde{\ell}_e := \ell_e + sign(e)x_e $, where $sign(e) = 1$ if $e \in P(a,b)$ and $sign(e) = -1$ if $e \not \in P(a,b)$. Furthermore, denote by $\mathcal L$ the set of leaves in the tree $T$ and $\tilde{d}$ by the distance measure with respect to new edge lengths $\tilde{\ell}$. In order to make $P(a,b)$ the longest path of $T$, it must hold that  $\tilde{d}(v,v_{ab}) \leq \tilde{d}(a,v_{ab})$ or $\tilde{d}(v,v_{ab}) \leq \tilde{d}(b,v_{ab})$ for all leaves $v$ in $T$, see Corollary $\ref{lemLong}$. After some elementary computations, we can formulate the inverse 1-maxian problem on $T$ as follows.
\begin{equation}\label{eqMax}
\begin{split}
\min \hspace{1.7cm} \mathcal C(x) \hspace{2cm}\\
\text{s.t. } \hspace{0.5cm}  \sum_{e \in P(a,v_{ab})}x_e + \sum_{e\in P(v,v_{ab})} &\geq \mathcal G(a,v), \hspace{0.5cm} \forall v \in \mathcal L\\
\sum_{e \in P(b,v_{ab})}x_e + \sum_{e\in P(v,v_{ab})} &\geq \mathcal G(b,v), \hspace{0.5cm} \forall v \in \mathcal L\\
0 \leq x_e &\leq \bar{x}_e, \hspace{1.3cm} \forall e\in E.
\end{split}
\end{equation}
Here, $\mathcal G(\star,v) := d(v,v_{ab}) - d(\star,v_{ab})$, $\star = a, b$, for $v \in \mathcal L$ are the gaps between distances. 
\section{The problem under $l_1$-norm}\label{sec2}
Assume that modifying an edge $e$ by a unit amount costs $c_e$, then the objective function in $\eqref{eqMax}$ under $l_1$-norm can be written as
\begin{center}
$\mathcal C(x) = \sum_{e\in E} c_ex_e$.
\end{center}
The inverse $2$-maxian problem on trees under $l_1$-norm can be formulated as a linear program. It is therefore solvable in polynomial time. However, an efficient combinatorial algorithm is still unknown.

We now consider the case where the underlying tree is a star graph with center vertex $v_0$.   We can directly deliver the following property from Corollary $\ref{lemLong}$.
\begin{cor}\label{Lem_StarMaxian}(Optimality criterion)\\
Given two leaf nodes $a$ and $b$ in a star graph $S$. Then, $\{a,b\}$ is a 2-maxian of the star graph if and only if $\ell_{(v_0,a)}$ and $ \ell_{(v_0,b)}$ are two largest edges.
\end{cor}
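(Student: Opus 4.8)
The plan is to obtain this as a direct specialization of the longest‑path criterion (Corollary~\ref{lemLong}) together with the 2‑maxian criterion (Theorem~\ref{theoMaxian}). First I would record the elementary geometry of a star: in $S$ with center $v_0$, every path joining two leaves $u$ and $w$ passes through $v_0$, so $d(u,w) = \ell_{(v_0,u)} + \ell_{(v_0,w)}$. In particular, for a leaf $v \notin \{a,b\}$ the three paths $P(a,v)$, $P(b,v)$, and $P(a,b)$ all contain $v_0$ and meet only there, so the common vertex is $v_{ab} = v_0$; and trivially $v_{ab} = a$ when $v = a$ and $v_{ab} = b$ when $v = b$.

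Next I would apply Corollary~\ref{lemLong}: $P(a,b)$ is the longest path of $S$ if and only if $d(v,v_{ab}) \le d(a,v_{ab})$ and $d(v,v_{ab}) \le d(b,v_{ab})$ for every leaf $v$. For $v = a$ and $v = b$ these hold automatically, since in each case one side of each inequality is $0$. For $v \notin \{a,b\}$, substituting $v_{ab} = v_0$ and using $d(v,v_0) = \ell_{(v_0,v)}$, $d(a,v_0) = \ell_{(v_0,a)}$, $d(b,v_0) = \ell_{(v_0,b)}$ turns the pair of inequalities into $\ell_{(v_0,v)} \le \ell_{(v_0,a)}$ and $\ell_{(v_0,v)} \le \ell_{(v_0,b)}$, i.e. $\ell_{(v_0,v)} \le \min\{\ell_{(v_0,a)}, \ell_{(v_0,b)}\}$. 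Ranging over all leaves $v \neq a,b$, this says exactly that no edge incident to $v_0$ is strictly longer than either of $\ell_{(v_0,a)}$, $\ell_{(v_0,b)}$, which is the assertion that $\ell_{(v_0,a)}$ and $\ell_{(v_0,b)}$ are two largest edge lengths of $S$.

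Finally, Theorem~\ref{theoMaxian} asserts that $\{a,b\}$ is a 2‑maxian of $S$ if and only if $P(a,b)$ is the longest path, and chaining this with the equivalence just established yields the corollary. I do not anticipate a real obstacle here, since the argument is a one‑step reduction to the tree case; the only point that needs careful phrasing is the handling of ties, where ``two largest edges'' must be understood in the weak sense captured by the $\min$ inequality above (an edge length equal to $\min\{\ell_{(v_0,a)},\ell_{(v_0,b)}\}$ is admissible), so that the equivalence is exact.
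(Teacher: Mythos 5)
Your argument is correct and is essentially the paper's route: the paper states this corollary as a direct consequence of Corollary~\ref{lemLong} (combined with Theorem~\ref{theoMaxian}), and your specialization --- observing that $v_{ab}=v_0$ for every leaf $v\neq a,b$ so the longest-path conditions become $\ell_{(v_0,v)}\leq\min\{\ell_{(v_0,a)},\ell_{(v_0,b)}\}$ --- is exactly the intended reduction, with the tie case correctly read in the weak sense.
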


For simplicity, we denote $\ell_{(v_0,a)}$ and $ \ell_{(v_0,b)}$ by $\ell_a$ and $\ell_b$, and the cost to modify one unit length of $(v_0,a)$ and $(v_0,b)$ by $c_a$ and $c_b$. By Corollary $\ref{Lem_StarMaxian}$, we consider how to modify the length of egdes with minimum cost such that $\tilde{\ell}_a$ and $\tilde{\ell}_b$ become the two largest edges in the star. Asume that $\ell_a < \ell_b$, we analyze these two following cases.
\begin{enumerate}
\item If $\tilde{\ell}_a \in [\ell_a,\ell_b]$, we do not increase the length of $\ell_b$ but decrease the length of $e \neq (v_0,b)$ with $\ell_e > \tilde{\ell}_a$ to $\tilde{\ell}_a$. 
We first presolve the problem as follows.
\begin{itemize}
\item If $\xi =\displaystyle \min_{e\neq (v_0,b): \ell_e > \ell_{a} }\{\ell_e - \bar{x}_e\} > \ell_{a}$, then increase $\ell_{a}$ by an amount $\xi - \ell_{a}$.
\item If $\theta = \ell_a + \bar{x}_{(v_0,a)} < \ell_e$ for $e \neq (v_0,b)$, then we decrease the length of $\ell_e$ by an amount $\ell_e - \theta$.
\end{itemize}
If the presolution is not possible, then the problem is infeasible. Otherwise, we can formulate the problem as a univariate optimization program.
\begin{center}
$\min f(z) = c_a (z - \ell_{a}) + \displaystyle \sum_{e \neq (v_0,b): \ell_e > z} c_e (\ell_e - z)$ 
\end{center}
where $z = \tilde{\ell}_a$ and $z \in [\ell_a,\ell_a + \bar{x}_{(v_0,a)}]$.

\item  If $\tilde{\ell}_a > \ell_b$, then the length of $e$ where $\ell_e > \tilde{\ell_a}$ must be decrease to $\tilde{\ell}_a$ and the length of $\ell_b$ must be increase to $\tilde{\ell}_a$.
First, we have to increase $\ell_a$ an amount $\ell_b - \ell_a$ and presove the problem as in the first case. If the presolution is not possible then the problem is infeasible. Otherwise, we can fomulate the problem as follows.
\begin{center}
$\min f(z) = (c_a + c_b) (z - \ell_b) + \displaystyle \sum_{e: \ell_e > z} c_e (\ell_e - z)$ 
\end{center}
where $ z = \tilde{\ell}_a = \tilde{\ell}_b$ and $z \in [\ell_b,\min\{\ell_a +  \bar{x}_{(v_0,a)},\ell_b +  \bar{x}_{(v_0,b)}\}]$.
\end{enumerate}

In both cases, the function $f(z)$ is a convex function. According to Alizadeh and Burkard $\cite{Alizadeh4}$, the univariate optimization problem can be solve in linear time. To get the optimal solution of the inverse $2$-maxian problem on $S$, we have to solve two problems and get the best one. Therefore, we attain the following result.

\begin{theo}
The inverse  2-maxian problem on a star graph can be solved in linear time.
\end{theo}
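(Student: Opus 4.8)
The plan is to assemble the ingredients already prepared: the optimality criterion of Corollary~\ref{Lem_StarMaxian}, a reduction of an optimal modification to a single scalar target, and the linear-time univariate convex minimisation of Alizadeh and Burkard~\cite{Alizadeh4}. Concretely, I would show that an optimal modification is of one of the two forms described above (Case~1: $\tilde\ell_a\in[\ell_a,\ell_b]$; Case~2: $\tilde\ell_a>\ell_b$), solve each by minimising the corresponding $f(z)$ over its feasible interval in $O(n)$ time, and return the cheaper of the two.

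The core of the argument is a short structural claim. Assume WLOG $\ell_a<\ell_b$. By Corollary~\ref{Lem_StarMaxian}, feasibility of a modification is precisely that $\tilde\ell_a$ and $\tilde\ell_b$ are the two largest of all $\tilde\ell_e$, and by Proposition~\ref{propInCre} (here $P(a,b)=\{(v_0,a),(v_0,b)\}$) an optimal modification only increases $\ell_a,\ell_b$ and only decreases the remaining edges; in particular $\tilde\ell_a\geq\ell_a$ and $\tilde\ell_b\geq\ell_b$. Put $m:=\min\{\tilde\ell_a,\tilde\ell_b\}$. Since being one of the two largest edges forces every other edge $e$ to satisfy $\tilde\ell_e\leq m$, and since $\mathcal C$ is monotone, in an optimal solution every such $e$ is left untouched if $\ell_e\leq m$ and decreased to exactly $m$ if $\ell_e>m$; any larger decrease strictly increases the cost without gaining feasibility. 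Hence the remaining edges are completely determined by the pair $(\tilde\ell_a,\tilde\ell_b)$, and the problem reduces to choosing $(\tilde\ell_a,\tilde\ell_b)$ with $\tilde\ell_a\geq\ell_a$, $\tilde\ell_b\geq\ell_b$ subject only to the box constraints. For a fixed value of $m$ the cost of the other edges is fixed, so one chooses $\tilde\ell_a,\tilde\ell_b$ as small as possible with $\min=m$: if $m\in[\ell_a,\ell_b]$ this gives $\tilde\ell_a=m,\ \tilde\ell_b=\ell_b$ (Case~1 with $z=m$), and if $m\geq\ell_b$ it gives $\tilde\ell_a=\tilde\ell_b=m$ (Case~2 with $z=m$, the term $c_a(m-\ell_a)+c_b(m-\ell_b)$ being exactly the presolve cost $c_a(\ell_b-\ell_a)$ plus $(c_a+c_b)(z-\ell_b)$). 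Thus an optimal solution is the better of the two cases, and in each the total cost as a function of the single variable $z$ is precisely the $f(z)$ displayed above.

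It remains to dispatch two routine points. First, the presolve moves and the infeasibility test only inspect and compare the finitely many quantities $\ell_e,\bar x_e$, so they run in $O(n)$ time, and afterwards the feasible values of $z$ form the stated interval. Second, $f$ is convex: its leading term ($c_a(z-\ell_a)$ in Case~1, $(c_a+c_b)(z-\ell_b)$ in Case~2) is affine, and every other summand equals $c_e\max\{\ell_e-z,0\}$, a nonnegative, nonincreasing, convex function of $z$; a finite sum of convex functions is convex, and $f$ is piecewise linear with breakpoints among the $\ell_e$. Its minimiser over the feasible interval is therefore the clipped value of a weighted-median-type quantity --- the least $z$ at which the right derivative $c_a-\sum_{e:\ell_e>z}c_e$ (resp. $c_a+c_b-\sum_{e:\ell_e>z}c_e$) becomes nonnegative --- which by~\cite{Alizadeh4} is found without sorting in $O(n)$ time. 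Solving both cases and keeping the better feasible outcome gives total running time $O(n)$.

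The step that needs the most care is the feasibility bookkeeping with the bounds $\bar x_e$: correctly identifying which edges are \emph{forced} to move (raising $\ell_a$, and in Case~2 also $\ell_b$, to the smallest value compatible with all interfering edges being pushed down within their budgets, and lowering any edge still above $\ell_a+\bar x_{(v_0,a)}$), and declaring infeasibility exactly when these forced requirements clash. Once the presolve has fixed the admissible interval for $z$, the remaining ingredients --- convexity of $f$ and the appeal to the linear-time univariate routine of~\cite{Alizadeh4} --- are mechanical.
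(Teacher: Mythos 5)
Your proposal follows essentially the same route as the paper: use the star optimality criterion (Corollary~\ref{Lem_StarMaxian}) to reduce the problem to the two cases $\tilde\ell_a\in[\ell_a,\ell_b]$ and $\tilde\ell_a>\ell_b$, handle the bound constraints by a presolve, minimise the same piecewise-linear convex univariate function $f(z)$ in each case via the linear-time routine of Alizadeh and Burkard~\cite{Alizadeh4}, and return the cheaper of the two. Your explicit structural argument (fixing $m=\min\{\tilde\ell_a,\tilde\ell_b\}$ and showing the other edges are determined by it) is a welcome justification of the case split that the paper only asserts, but it does not change the method.
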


\section{Problem under Chebyshev norm}\label{sec3}
We recall that modifying an edge $e$ by one unit length costs $c_e$, the objective function in $\eqref{eqMax}$ under Chebyshev norm can be written as
\begin{center}
$\mathcal C(x) = \max_{e\in E} \{c_ex_e\}$.
\end{center}

The solution method of the problem is based on the greedy modification. We define a valid  modification of egde lengths in $T$ with cost $C$ as follows.
\begin{defi}(Valid  modification, see $\cite{Nguyen2}$)\\
A valid  modification with cost $C$ is to modify each edge of $T$ as much as possible such that the cost is limited within $C$, i.e. we set 
\begin{center}
$ x_e := \begin{cases} \frac{C}{c_e} &\mbox{,if } c_e\bar{x}_e > C \\ \bar{x}_e & \mbox{,if } c_e\bar{x}_e \leq C. \end{cases}  $
\end{center}
\end{defi}

We can solve the problem by the following two phases.\\

\textbf{Phase 1:} \textit{Find the interval that contains the optimal cost.}\\

We first sort the costs $\{c_ex_e\}_{e \in E}$ in nondecreasing order and unite the similar values. Then we obtain a sequence of costs
\begin{center}
$c_1\bar{x}_1 < c_2\bar{x}_2 < \ldots < c_m\bar{x}_m$.
\end{center}  
Here, $m =O(n)$.


Then we aim to find the smallest index $i_0$ such that $P(a,b)$ become the longest path by applying the valid  modification with cost $c_{i_0}\bar{x}_{i_0}$. The index $i_0$ can be found by applying a binary search algorithm. If we apply the valid  modification with cost $c_k\bar{x}_k$ but the path $P(a,b)$ does not become the longest path then $i_0 > k$. Otherwise, we know $i_0 \leq k$. 

Let us analyze the complexity to find $i_0$. In each iteration modifying the edge lengths of the tree costs linear time. Then we find the longest path of the tree in linear time (see Handler $\cite{Handler}$), and compare the length of $P(a,b)$ with the longest one in order to decide if Corollary $\ref{lemLong}$ holds or not in linear time. As the binary search stops after $O(\log n)$ iterations, this procedure runs in $O(n\log n)$ time.

Assume that we have found the  interval $[c_{i_0-1}\bar{x}_{i_0-1},c_{i_0}\bar{x}_{i_0}]$ that contains the optimal cost. We apply a valid  modification with cost $c_{i_0-1}\bar{x}_{i_0-1}$ and get the modified tree $\tilde{T}$. Also, we update the upper bounds of modifications. The next step is to define a parameter $t \in [0,c_{i_0}\bar{x}_{i_0}]$ and find the smallest value $t$ such that $P(a,b)$ becomes the longest path of $\tilde{T}$ for a valid  modification with cost $c_{i_0}t$.\\

\textbf{Phase 2:} \textit{Find the minimizer $t$ with respect to the optimal objective value in $[0,c_{i_0}\bar{x}_{i_0}]$.}\\

After deleting all edges of the path $P(a,b)$ we get a forest $T^{del}$. Consider a vertex $v$ in the path $P(a,b)$, $v \neq a, b$. A branch $T_v$ is, by definition, a connected component of $T^{del}$ that contains the vertex $v$.   Denote by $\mathcal L(T_v)$  the set of leaves in $T_v$ except $a$ and $b$. Now we try to modify the edge lengths of $T$ so that there is no path $P(v,v')$, for $v \in P(a,b)$, $v \neq a, b$ and $v' \in \mathcal L(T_v)$, is longer than $\min\{d(v,a),d(v,b)\}$. To solve this problem, we apply a valid modification with cost $c_{i_0}t$ for $t \in [0,\bar{x}_{i_0}]$ and consider  the modifying lengths of these following paths.
\begin{itemize}
\item For a path $P(v,v')$ with $v' \in \mathcal L(T_v)$ and $d(v,v') > \min\{d(v,a),d(v,b)\}$, its modifying length is 
\begin{center}
$d(v,v') - \displaystyle c_{i_0}\sum_{e \in P(v,v'), \bar{x}_e > 0}\frac{t}{c_e}$.
\end{center}
\item For the paths $P(v,a)$ and  $P(v,b)$, we obtain the modifying lengths
\begin{center}
$d(v,a) + \displaystyle c_{i_0}\sum_{e \in P(v,a), \bar{x}_e > 0}\frac{t}{c_e}$,
\end{center}
and 
\begin{center}
$d(v,b) + \displaystyle c_{i_0}\sum_{e \in P(v,b), \bar{x}_e > 0}\frac{t}{c_e}$.
\end{center}
\end{itemize}
We first identify the value $t_1$ and $t_2$ such that
\begin{equation}\label{eqt1}
t_1 := \text{arg}\min \max\{d(v,v') - \displaystyle c_{i_0}\sum_{e \in P(v,v'), \bar{x}_e > 0}\frac{t}{c_e},d(v,a) + \displaystyle c_{i_0}\sum_{e \in P(v,a), \bar{x}_e > 0}\frac{t}{c_e}\},
\end{equation}
\begin{equation}\label{eqt2}
t_2 := \text{arg}\min \max\{d(v,v') - \displaystyle c_{i_0}\sum_{e \in P(v,v'), \bar{x}_e > 0}\frac{t}{c_e},d(v,b) + \displaystyle c_{i_0}\sum_{e \in P(v,b), \bar{x}_e > 0}\frac{t}{c_e}\},
\end{equation}
where $v \in P(a,b)$, $v' \in \mathcal L(T_v)$, $d(v,v') > \min\{d(v,a),d(v,b)\}$ and $t \in [0,\bar{x}_{i_0}]$. As we have to find the minimizer of the upper envelop of linear functions, the algorithm in Sec. 5 of Gassner $\cite{Gassner1}$ will find $t_1$ and $t_2$ in linear time provided that all linear functions have been found.

The smallest value $t^{*}$, such that the modifying lengths of all paths $P(v,v')$ for $v \in P(a,b)\backslash \{a,b\}$ and  $v' \in \mathcal L(T_v)$ are not larger than  $\min\{\tilde{d}(v,a),\tilde{d}(v,b)\}$, is easily identified as $t^{*} := \max\{t_1,t_2\}$.

We now analyze the complexity to find all linear functions in the expressions \eqref{eqt1}, \eqref{eqt2}. We number the vertices in the path $P(a,b)$ from the left to the right side by $a, v_1, v_2, \ldots ,v_k, b$, where $k = O(n)$. Denote by $E(S)$ the set of edges in a subtree $S$. We start from vertex $v_1$, it costs $O(|E(T_{v_1})|)$ time (by a breath-first-search procedure)  to identify all functions 
\begin{center}
$d(v_1,v') - \displaystyle c_{i_0}\sum_{e \in P(v_1,v'), \bar{x}_e > 0}\frac{t}{c_e}$ 
\end{center}
for $v' \in \mathcal L(T_{v_1})$ and $O(|E(P(a,b))|)$ time to identify 
\begin{center}
$d(v_1,\star) + \displaystyle c_{i_0}\sum_{e \in P(v_1,\star), \bar{x}_e > 0}\frac{t}{c_e}$ 
\end{center}
for $\star = a,b$.

Next we have to find all linear functions with respect to vertex $v_2$. To identify the functions
 \begin{center}
$d(v_2,\star) + \displaystyle c_{i_0}\sum_{e \in P(v_2,\star), \bar{x}_e > 0}\frac{t}{c_e}$ 
\end{center}
for $\star = a,b$, we have to exchange the modification edge $(v_1,v_2)$ that contribute to the augmentation of path $P(v_2,a)$. We continue the process for vertex $v_3, v_4, \ldots ,v_k$ similarly. 

In conclusion, it costs $O(\sum_{v \in P(a,b) \backslash\{a,b\}}|E(T_v)| + 2|E(P(a,b))|) = O(n)$ time to complete the procedure to find all linear functions in \eqref{eqt1} and \eqref{eqt2}. 

\begin{theo}
The inverse $2$-maxian problem on a tree under Chebyshev norm can be solved in $O(n\log n)$ time.
\end{theo}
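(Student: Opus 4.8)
The plan is to verify that the two-phase scheme sketched above is correct and runs in $O(n\log n)$ time, which is exactly the assertion (throughout this section $\{a,b\}$ is the prescribed pair and \eqref{eqMax} is the corresponding feasibility system).

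First I would reduce the search to \emph{valid} modifications. If $x$ is any feasible point of \eqref{eqMax} with Chebyshev cost $C=\max_e c_ex_e$, then the valid modification with cost $C$ raises every $x_e$ to $\min\{C/c_e,\bar{x}_e\}\ge x_e$; hence $\tilde d(a,b)$ does not decrease, while for every leaf $v$ the distance $\tilde d(v,v_{ab})$ does not increase and $\tilde d(a,v_{ab}),\tilde d(b,v_{ab})$ do not decrease, so by Corollary \ref{lemLong} this valid modification is again feasible, at cost at most $C$. Consequently the optimal value equals $C^{*}:=\min\{C\ge 0:\text{the valid modification with cost }C\text{ makes }P(a,b)\text{ the longest path}\}$, the instance being infeasible precisely when even the valid modification at $C=c_m\bar{x}_m$ (beyond which nothing further changes) fails. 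The same comparison shows feasibility is monotone in $C$: enlarging $C$ lengthens the edges of $P(a,b)$ and shortens the edges off $P(a,b)$ by nondecreasing amounts, so for every branch leaf $v'$ its modified distance to $P(a,b)$ can only drop while $\min\{\tilde d(v,a),\tilde d(v,b)\}$ can only rise.

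This monotonicity legitimises Phase 1. Since $C\mapsto(x_e(C))_e$ is piecewise linear with breakpoints exactly the values $c_e\bar{x}_e$, an $O(n\log n)$ sort yields $c_1\bar{x}_1<\dots<c_m\bar{x}_m$ with $m=O(n)$, and a binary search over this list locates the smallest index $i_0$ whose valid modification is feasible, so that $C^{*}\in[c_{i_0-1}\bar{x}_{i_0-1},c_{i_0}\bar{x}_{i_0}]$ (with $C^{*}=0$ if $\{a,b\}$ is already a $2$-maxian). Each of the $O(\log n)$ tests builds the valid modification in linear time, extracts a longest path of the modified tree by Handler's procedure \cite{Handler}, and checks the criterion of Corollary \ref{lemLong} in linear time, for an $O(n\log n)$ total. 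In Phase 2 one applies the valid modification at the left endpoint of the located interval, updates the bounds $\bar{x}_e$, and writes the residual spending as $c_{i_0}t$ with $t\in[0,\bar{x}_{i_0}]$; then every still-unsaturated edge is modified by $c_{i_0}t/c_e$, and by Corollary \ref{lemLong} the path $P(a,b)$ becomes longest exactly when, for every branch vertex $v$ and every leaf $v'\in\mathcal L(T_v)$, the decreasing linear function giving the modified length of $P(v,v')$ stays at most both increasing linear functions giving $\tilde d(v,a)$ and $\tilde d(v,b)$ — and only the finitely many $v'$ that already violate this at $t=0$ need be retained. The least $t$ satisfying all the $a$-inequalities is $t_1$ of \eqref{eqt1} and the least satisfying all the $b$-inequalities is $t_2$ of \eqref{eqt2}; each is the minimiser of the upper envelope of $O(n)$ lines, hence computable in linear time by the routine of Section~5 of \cite{Gassner1}, after which $C^{*}=c_{i_0}\max\{t_1,t_2\}$ and an optimal modification is recovered.

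The step I expect to be the main obstacle is assembling, in $O(n)$ total time rather than $O(n^{2})$, all the $O(n)$ linear functions appearing in \eqref{eqt1} and \eqref{eqt2}. The functions attached to the paths $P(v,v')$ are produced by a single breadth-first search inside each branch $T_v$, accumulating $d(v,v')$ and the slope $\sum_{e\in P(v,v'),\,\bar{x}_e>0}1/c_e$ edge by edge; because distinct branches are edge-disjoint, these searches cost $\sum_{v\in P(a,b)\setminus\{a,b\}}|E(T_v)|=O(n)$ altogether. The functions attached to $P(v,a)$ and $P(v,b)$ are maintained incrementally as $v$ runs through $a,v_1,\dots,v_k,b$: the transition from $v_i$ to $v_{i+1}$ only appends the single edge $(v_i,v_{i+1})$ and updates the running intercept and slope in $O(1)$, a further $O(|E(P(a,b))|)=O(n)$. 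Adding the costs of the sort, the binary search, and Phase 2 then gives the claimed $O(n\log n)$ bound.
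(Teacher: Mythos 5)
Your proposal follows essentially the same two-phase scheme as the paper: sort the breakpoints $c_e\bar{x}_e$, binary-search for the interval containing the optimal Chebyshev cost using valid modifications and the longest-path check of Corollary \ref{lemLong}, then locate the exact optimum as the minimizer of upper envelopes of $O(n)$ linear functions assembled in linear time and solved via Gassner's routine. Your added justification of why valid modifications dominate arbitrary feasible modifications (monotonicity in $C$) is a correct filling-in of a step the paper leaves implicit, so the argument stands as written.
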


\section{Problem under Hamming distance}\label{sec4}
First, let us focus on the problem under bottleneck Hamming distance. Assume that modifying one unit length of edge $e$ costs $c_e$, we aim to minimize the following objective function
\begin{center}
$\max_{e\in E}\{c_eH(x_e)\}$.
\end{center}
Here, $H$ is a Hamming distance and defined by
\begin{center}
$ H(\theta) := \begin{cases} 0 &\mbox{, if } \theta = 0 \\ 1 & \mbox{, otherwise} \end{cases}$
\end{center}

By the special structure of the Hamming distance, the objective function receives finitely many values, say $\{c_e: e\in E\}$. Therefore, we can solve the problem by finding the smallest value in $\{c_e: e\in E\}$ such that the optimality criterion in Lemma $\ref{lemLong}$ holds.

Number the edges in $T$ by $1,\ldots ,m$, and the corresponding costs are $c_1,\ldots ,c_m$, for $m=n-1$.  Let us first sort the costs $\{c_e: e\in E\}$ increasingly and get without loss of generality a sequence
\begin{center}
$c_1 \leq c_2 \leq \ldots \leq c_m$.
\end{center}

Now we apply a binary search algorithm to find the optimal cost. We start with the cost $c_k$, $k = \lfloor \frac{m+1}{2} \rfloor$. We modify all the edges $1,2,\ldots ,k-1$. If the optimality criterion holds, we know that the optimal value is less than or equal to $c_k$. Otherwise, it is larger than $c_k$. In each iteration we recomputing the length of the longest path in linear time (see Handler $\cite{Handler}$) and compare it with the length of $P(a,b)$. Moreover, as the binary search stop after $O(\log m)$ iterations, Phase 1 runs in $O(m \log m) = O(n\log n)$ time.

\begin{theo}
The inverse 2-maxian problem on trees under bottleneck Hamming distance can be solved in $O(n\log n)$ time.
\end{theo}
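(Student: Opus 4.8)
Proof plan.

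The plan is to follow the template already used for the Chebyshev norm, but to exploit the fact that the bottleneck Hamming objective $\max_{e\in E}\{c_eH(x_e)\}$ takes only finitely many values, so a single binary search suffices. First I would record the structural observation: since $H\in\{0,1\}$, the objective value of a modification $x$ equals the largest $c_e$ over the edges that are actually changed (and equals $0$ if nothing is changed), so it lies in $\{0\}\cup\{c_e:e\in E\}$ and, for any target cost $C$, a modification attains cost $\le C$ if and only if it changes only edges with $c_e\le C$. Combining this with Proposition~\ref{propInCre}, it then suffices to work, for each threshold $C$, with the \emph{greedy modification with threshold $C$}, namely $x_e:=\bar x_e$ for every edge with $c_e\le C$ and $x_e:=0$ otherwise (recall $\tilde\ell_e=\ell_e+\mathrm{sign}(e)x_e$, so this increases the $P(a,b)$-edges and shortens the off-path edges maximally among the affordable ones).

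The key step is a monotonicity claim: raising any single $x_e$ never destroys the property ``$P(a,b)$ is the longest path.'' This follows because each left-hand side $\sum_{e\in P(a,v_{ab})}x_e+\sum_{e\in P(v,v_{ab})}x_e$ of the feasibility constraints in \eqref{eqMax} (and likewise with $b$ in place of $a$), which encode Corollary~\ref{lemLong}, is nondecreasing in every $x_e$; concretely, increasing $x_e$ for $e\in P(a,b)$ lengthens $P(a,b)$ while leaving all competing $v_{ab}$-distances unchanged, and decreasing $\ell_e$ for $e\notin P(a,b)$ only shortens competing paths. Consequently, among all modifications changing only edges with $c_e\le C$, the greedy one with threshold $C$ is feasible (makes $P(a,b)$ longest) if and only if some cost-$\le C$ modification is feasible; and, sorting the costs $c_1\le c_2\le\cdots\le c_m$ with $m=n-1$, feasibility of the greedy modification with threshold $c_k$ is monotone nondecreasing in $k$. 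Hence there is a well-defined smallest index $i_0$ for which the greedy modification with threshold $c_{i_0}$ works, and it can be located by binary search over $k\in\{1,\dots,m\}$: each test performs the greedy modification (linear time), computes the longest path of the modified tree by Handler~\cite{Handler} (linear time), and checks—by Corollary~\ref{lemLong}—whether its length equals $\tilde d(a,b)$. With $O(\log m)$ tests of $O(n)$ work each, plus the $O(n\log n)$ sort, the whole procedure runs in $O(n\log n)$ time.

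It remains to read off the answer and handle degenerate instances. If $P(a,b)$ is already the longest path of $T$ (a single linear-time check before the search), the optimal cost is $0$; if even $x\equiv\bar x$ fails Corollary~\ref{lemLong}, the instance is infeasible. Otherwise the optimal objective value is exactly $c_{i_0}$: the greedy modification with threshold $c_{i_0}$ is feasible and has cost $c_{i_0}$, whereas any modification of cost $<c_{i_0}$ would change only edges in $\{1,\dots,i_0-1\}$ and hence would be dominated coordinatewise by the greedy modification with threshold $c_{i_0-1}$, which by the choice of $i_0$ is infeasible—a contradiction. This yields an optimal solution together with its value.

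I expect the monotonicity claim of the second paragraph to be the only non-routine point: one must verify carefully, using Proposition~\ref{propInCre} and the explicit constraint form in \eqref{eqMax}, that increasing an on-path length or decreasing an off-path length can never create a path longer than $P(a,b)$. Everything else—the finiteness of the objective range, the sorting, the binary-search bookkeeping, and the linear-time diameter computation—is standard.
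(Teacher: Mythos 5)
Your proposal is correct and follows essentially the same route as the paper: exploit that the bottleneck Hamming objective takes only the finitely many values $\{c_e\}$, sort them, and binary search for the smallest threshold $c_{i_0}$ such that modifying all affordable edges maximally (increasing on $P(a,b)$, decreasing off it) makes $P(a,b)$ the longest path, with each test done in linear time via Handler's longest-path computation, giving $O(n\log n)$ overall. Your explicit monotonicity argument justifying the binary search, and the optimality/degeneracy bookkeeping, are points the paper leaves implicit, but the method is the same.
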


For the problem under weighted sum Hamming distance,  the objective function can be written as $\sum_{e \in E}c_eH(x_e)$. We can easily reduced the Knapsack problem into an inverse $p$-maxian problem under weighted sum Hamming distance in polynomial. Therefore, we get the following result.
\begin{theo}
The inverse $p$-maxian problem on tree under weighted sum Hamming distance is $NP$-hard.
\end{theo}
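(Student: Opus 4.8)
The plan is to prove $NP$-hardness of the natural decision version of the problem: given the tree $T$ with all of its data together with a budget $B$, can the prespecified $p$ vertices be turned into a $p$-maxian by a feasible modification of cost $\sum_{e\in E}c_eH(x_e)\le B$? I would obtain this by a polynomial reduction from the knapsack problem in the guise of its well-known $NP$-complete restriction \textsc{Subset Sum}: given positive integers $a_1,\dots,a_n$ and a target $M$, is there $I\subseteq\{1,\dots,n\}$ with $\sum_{i\in I}a_i=M$?

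Given such an instance, I would build the following tree $T$. Take a path $a=w_0,w_1,\dots,w_n,w_{n+1}=b$ in which each of the first $n$ edges $e_i=(w_{i-1},w_i)$ has length $\ell_{e_i}=0$, may only be increased, by at most $\bar p_{e_i}=a_i$, and has unit modification cost $c_{e_i}=a_i$; the last edge $g=(w_n,w_{n+1})$ has the fixed length $M$ (so $\bar p_g=\bar q_g=0$). Let $c:=w_n$, and attach to $c$ a pendant leaf $z$ through a fixed edge of length $M$ and $p-2$ further pendant leaves $u_1,\dots,u_{p-2}$ through fixed edges of length $0$. Declare $S=\{a,b,u_1,\dots,u_{p-2}\}$ to be the prespecified set of $p$ vertices, and set the budget $B:=M$. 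The tree has $O(n+p)$ vertices, so the construction is polynomial.

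The heart of the argument is to verify two facts. First, since the only modifiable edges are the $e_i$ on $P(a,b)$ and they can only be increased, any feasible modification is described by a subset $I$ of touched edges together with amounts $x_{e_i}\in[0,a_i]$, $i\in I$; it contributes $\tilde d(a,c)=\sum_{i\in I}x_{e_i}\le\sum_{i\in I}a_i$ to the length of $P(a,b)$, and — because $H$ charges $c_{e_i}$ for a touched edge regardless of the amount — incurs the cost $\sum_{i\in I}c_{e_i}=\sum_{i\in I}a_i$. Second, running through the $O(p^2)$ pairs of vertices of $S$ and comparing path lengths (Corollary \ref{lemLong}), one checks that $\{a,b\}$ is the only pair that can ever realise a longest path, and that $P(a,b)$ is a longest path of $T$ precisely when $\tilde d(a,c)\ge M$ (the binding competitors being $P(b,z)$ of length $2M$ and $P(a,z)$ of length $\tilde d(a,c)+M$). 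Combining the two with the characterisation of $p$-maxians via longest paths, $S$ can be made a $p$-maxian at cost at most $M$ if and only if there is $I$ with $\sum_{i\in I}a_i\le M$ and $\sum_{i\in I}a_i\ge M$, i.e. with $\sum_{i\in I}a_i=M$ — exactly the \textsc{Subset Sum} question. This yields the theorem.

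The main obstacle is the gadget's rigidity in the second verification: one must argue, for every pair of prespecified vertices and for all admissible edge lengths, that no pendant leaf can produce a path at least as long as $P(a,b)$ unless $\{a,b\}$ itself already gives a longest path, so that feasibility of the inverse instance really collapses to the single inequality $\tilde d(a,c)\ge M$. The complementary point worth stressing is the role of the Hamming cost: since $c_eH(x_e)$ ignores the size of $x_e$, a solver cannot ``overshoot'' $M$ on $P(a,b)$ and then shorten edges for free, which is precisely what forces the ``$\le M$'' side of the subset-sum equality; once both inequalities are pinned down the equivalence is routine.
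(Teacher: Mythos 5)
Your reduction is correct and follows essentially the same route as the paper, which merely asserts (without details) that the Knapsack problem reduces to the inverse $p$-maxian problem under weighted sum Hamming distance; you supply a concrete Subset Sum gadget whose two directions check out (the cost bound forces $\sum_{i\in I}a_i\le M$ while the longest-path criterion of Corollary~\ref{lemLong} forces $\tilde d(a,c)\ge M$, so equality holds). The only detail to add is to fix positive (say unit) vertex weights in the constructed instance, since the ``only if'' direction of the $p$-maxian/longest-path characterisation you invoke would be vacuous if weights were allowed to be zero.
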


\section{Conslusion}
We have addressed the inverse $p$-maxian problem, $p\geq 2$, under various objective functions. It is shown that the problem can be reduced  to $p^{2}$ many $2$-maxian problems. Then we have formulated the inverse $2$-maxian problem on trees under $l_1$-norm as a linear program and solved the problem on star graphs in linear time. Furthermore,  the inverse $2$-maxian  problem under Chebyshev norm and Hamming distance is solvable in $O(n\log n)$ time, where $n$ is the number of vertices in the tree. For furture research topics, we will consider the inverse maxian problem on other classes of graphs, e.g., cacti, interval graphs, block graphs, etc.




\end{document}